\theoremstyle{plain}
\newtheorem{theorem}{Theorem}
\newtheorem*{mainthm}{Main Theorem}
\newtheorem{lemma}{Lemma}
\newtheorem{claim}{Claim}
\newtheorem{corollary}{Corollary}
\newtheorem{question}{Question}
\theoremstyle{definition}
\newcommand{\de}{\delta}
\newcommand{\al}{\alpha}
\newcommand{\ga}{\gamma}
\newcommand{\R}{\mathbb{R}}
\newcommand{\Q}{\mathbb{Q}}
\newcommand{\Z}{\mathbb{Z}}
\newcommand{\N}{\mathbb{N}}
\newcommand{\intpart}[1]{\left\lfloor #1 \right\rfloor}
\newcommand{\fracpart}[1]{\left\{ #1 \right\}}
\newcommand{\fracdist}[1]{\left\| #1 \right\|}
\newcommand{\bothceil}[1]{\left\lceil #1 \right\rceil}
\newcommand{\bothfloor}[1]{\left\lfloor #1 \right\rfloor}
\newcommand{\psa}{\text{PS}(\al)}
\newcommand{\ps}[1]{\text{PS}(#1)}
\newcommand{\varchange}[2]{t_{#1} \left({#2} \right)}
\newcommand{\varchangeempty}[1]{t_{#1}}
\newcommand{\varchangeprime}[2]{t'_{#1}({#2})}
\newcommand{\intone}{\theta_1}
\newcommand{\inttwo}{\theta_2}
\newcommand{\intthree}{\theta_3}
\newcommand{\fs}[1]{\text{FS}\big({#1}\big)}
\newcommand{\ip}{\text{IP}}
\newcommand{\vip}{\text{VIP}}
\newcommand{\ipzero}{\text{IP}_0}
\newcommand\restr[2]{{ \left.\kern-\nulldelimiterspace #1 \right|_{#2}}}
\author[D. Glasscock]{Daniel Glasscock}
\address{Department of Mathematics, The Ohio State University, 231 W. 18th Ave., Columbus, OH 43210}
\email{glasscock.4@math.osu.edu}
\title[Solutions to linear equations in Piatetski-Shapiro sequences]{Solutions to certain linear equations in Piatetski-Shapiro sequences}
\begin{document}

\begin{abstract}
Denote by $\psa$ the image of the Piatetski-Shapiro sequence $n \mapsto \intpart {n^\al}$, where $\al > 1$ is non-integral and $\intpart x$ is the integer part of $x \in \R$. We partially answer the question of which bivariate linear equations have infinitely many solutions in $\psa$: if $a, b \in \R$ are such that the equation $y=ax+b$ has infinitely many solutions in the positive integers, then for Lebesgue-a.e. $\al > 1$, it has infinitely many or at most finitely many solutions in $\psa$ according as $\al < 2$ (and $0 \leq b < a$) or $\al > 2$ (and $(a,b) \neq (1,0)$). We collect a number of interesting open questions related to further results along these lines.
\end{abstract}

\subjclass[2010]{11J83; 11B83}

\keywords{Piatetski-Shapiro sequences, solutions to linear equations, Diophantine approximation, uniform distribution}

\maketitle

\section{Introduction}\label{sec:intro}

A \emph{Piatetski-Shapiro sequence} is a sequence of the form $(\intpart {n^\al})_{n \in \N}$ for non-integral $\al > 1$, where $\intpart x$ is the integer part of $x \in \R$  and $\N$ is the set of positive integers. Denote by $\psa$ the image of $n \mapsto \intpart {n^\al}$. We will say that the linear equation
\begin{align}\label{eqn:main}y=ax+b, \quad a, b \in \R\end{align}
is \emph{solvable} in $\psa$ if there are infinitely many distinct pairs $(x,y) \in \psa \times \psa$ satisfying (\ref{eqn:main}), and \emph{unsolvable} otherwise. This terminology extends as expected to solving equations and systems of equations in other subsets of $\N$.

\begin{theorem}\label{thm:maintheorem}
Suppose that (\ref{eqn:main}) is solvable in $\N$. For Lebesgue-a.e. $\al > 1$,
\begin{enumerate}[i.]
\item if $\al < 2$ and $0 \leq b < a$, then (\ref{eqn:main}) is solvable in $\psa$;
\item if $\al > 2$ and $(a,b) \neq (1,0)$, then (\ref{eqn:main}) is unsolvable in $\psa$.
\end{enumerate}
\end{theorem}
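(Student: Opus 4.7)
Since $y = ax+b$ has infinitely many $\N$-solutions, $a, b \in \Q$; write $a = p/q$ in lowest terms and parametrize these solutions as $(x_k, y_k) := (x_0 + kq, y_0 + kp)$ for $k \geq 0$. Off a Lebesgue-null set of $\alpha$, one has $x \in \psa$ iff $\{x^{1/\alpha}\} > 1 - \phi(x,\alpha)$, where $\phi(x,\alpha) := (x+1)^{1/\alpha} - x^{1/\alpha} \asymp x^{1/\alpha-1}$.

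For part (ii), fix a compact $I = [\alpha_1, \alpha_2] \subset (2, \infty)$. The plan is to prove
\[
\int_I \#\{(m,n) \in \N^2 : \lfloor n^\alpha\rfloor = a\lfloor m^\alpha\rfloor + b\}\,d\alpha < \infty,
\]
so that by Fubini the integrand is finite a.e.\ on $I$; exhausting $(2,\infty)$ by such $I$ finishes. The floor equation forces $f_{m,n}(\alpha) := n^\alpha - am^\alpha \in (b-a, b+1)$. For $a \neq 1$, at any $\alpha_0$ where $f_{m,n}$ is bounded one has $n \approx a^{1/\alpha_0} m$, so $f'_{m,n}(\alpha_0) = (a\log a/\alpha_0)\,m^{\alpha_0}(1+o(1))$; in particular $|f'_{m,n}| \gg m^{\alpha_1}$ on $I$, yielding $|\{\alpha \in I : f_{m,n}(\alpha) \in (b-a,b+1)\}| \ll m^{-\alpha_1}$. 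Only $O(m)$ integers $n$ are admissible per $m$, so summation gives $\ll \sum_m m^{1-\alpha_1}$, convergent as $\alpha_1 > 2$. The case $a = 1$ (so $b \neq 0$) is immediate: $(m+j)^\alpha - m^\alpha \geq j\alpha m^{\alpha-1}$ eventually exceeds $|b|+1$, eliminating all but finitely many pairs $(m,n)$ for any $\alpha > 1$.

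For part (i), fix $I = [\alpha_1, \alpha_2] \subset (1,2)$ and put $A_k := \{\alpha \in I : x_k, y_k \in \psa\}$; the target is $|\limsup_k A_k| = |I|$. With $m := \lceil x_k^{1/\alpha}\rceil$, a Taylor expansion yields $y_k^{1/\alpha} = a^{1/\alpha} m + \tilde b(m,\alpha)$ with $|\tilde b| \ll m^{1-\alpha}$, so $\alpha \in A_k$ is equivalent to the existence of $m$ satisfying (a) $\lfloor m^\alpha\rfloor \equiv x_0 \pmod q$ and (b) $\{a^{1/\alpha} m\}$ lying in an interval of length $\asymp m^{1-\alpha}$ near $1$ whose position depends in a Lipschitz way on $\{m^\alpha\}$ at the same scale. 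A density heuristic gives $|A_k| \asymp k^{2/\alpha - 2}$, whence $\sum_k |A_k|$ diverges precisely for $\alpha < 2$. The hypothesis $0 \leq b < a$ keeps $\tilde b$ in a bounded range, ensuring the target intervals have comparable length across $m$.

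The main obstacle is turning this divergence into $|\limsup A_k| = |I|$. I plan a Kochen--Stone second-moment argument: establish the quasi-independence bound $\sum_{k,\ell \leq K} |A_k \cap A_\ell| \ll (\sum_{k\leq K} |A_k|)^2$, whence $|\limsup A_k| > 0$; then invoke a zero-one law (a Cassels-type density argument, or the observation that the same proof applies on every subinterval $I' \subset I$) to upgrade positive measure to $|I|$. The quasi-independence reduces to an Erd\H{o}s--Tur\'an--Koksma discrepancy estimate for the four-dimensional vectors $(\{m^\alpha\}, \{m'^\alpha\}, \{a^{1/\alpha} m\}, \{a^{1/\alpha} m'\})$ as $\alpha$ varies over $I$; bounding the associated exponential sums, exploiting the generic ``incommensurability'' of $m^\alpha$ and $a^{1/\alpha}m$ for a.e.\ $\alpha$, is the most delicate step.
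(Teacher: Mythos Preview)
Your argument for part~(ii) is correct and takes a genuinely different route from the paper. The paper reduces both halves of the theorem to a single Diophantine statement (its Theorem~\ref{thm:firstDAform}: the system $\|a^{1/\alpha}n\|\le c/n^{\alpha-1}$, $\{\gamma n^\alpha\}\in I$) and then, after a change of variables, disposes of the $\alpha>2$ case by the first Borel--Cantelli lemma. Your direct Fubini argument---bounding $|\{\alpha\in I: n^\alpha-am^\alpha\in(b-a,b+1)\}|\ll m^{-\alpha_1}$ from the derivative and summing over the $O(m)$ admissible $n$ per $m$---is more elementary and bypasses the reduction entirely. One small point to make explicit: on the level set $f'_{m,n}(\alpha)=(a\log a/\alpha)\,m^\alpha(1+o(1))$ has constant sign for $a\neq 1$ and large $m$, so the level set is a single interval and the derivative bound yields the measure bound cleanly.

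For part~(i), however, what you have is a plan whose decisive step is not carried out, and the plan lacks the paper's key simplification. Your proposed quasi-independence bound would require controlling, in the variable $\alpha$, the joint distribution of $(\{m^\alpha\},\{m'^\alpha\},\{a^{1/\alpha}m\},\{a^{1/\alpha}m'\})$ down to scale $m^{1-\alpha}$; the maps $\alpha\mapsto m^\alpha$ and $\alpha\mapsto a^{1/\alpha}m$ oscillate at wildly different speeds, and you give no indication how the relevant exponential sums would actually be bounded. (Note too that your $m=\lceil x_k^{1/\alpha}\rceil$ itself jumps with $\alpha$, and that even the lower bound $|A_k|\gg k^{2/\alpha-2}$ already presupposes a two-dimensional independence you have not established.) The paper sidesteps all of this by parametrizing by the pre-image $n$ rather than by the $\N$-solution index $k$---so the condition becomes $\|a^{1/\alpha}n\|\le c/n^{\alpha-1}$ together with a congruence constraint on $\{n^\alpha\}$---and then substituting $\theta=a^{1/\alpha}$. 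This makes the Diophantine condition \emph{linear} in the new variable: the sets $E_n=\{\theta:\|\theta n\|\le 1/n\}$ are unions of equal-length intervals centred at rationals $m/n$, and the second-moment overlap $\lambda(E_p\cap E_q)$ reduces to a routine lattice-point count (Lemma~\ref{lem:solutionstoeuclideanproblem}). The only equidistribution input needed is then one-dimensional---equidistribution of $m\mapsto \{\gamma n^{t_a(m/n)}\}$ over the centres---handled by a single van der Corput differencing (Lemma~\ref{lem:equid}). Without an analogue of this linearizing change of variables, your second-moment step is not a gap that closes by routine discrepancy estimates.
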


Piatetski-Shapiro sequences get their name from Ilya Piatetski-Shapiro, who proved a Prime Number Theorem for $(\intpart {n^\al})_{n \in \N}$ for all $1 < \al < 12/11$; see \cite{psoriginal}. Similar results regarding the distribution of $(\intpart {n^\al})_{n \in \N}$ in arithmetic progressions and the square-free numbers hold for various ranges of $\al$ in both metrical and complete versions; see \cite{baker} for recent results in this direction and further references.

The motivation for this work comes from another line of thought. Since $\psa$ is the (rounded) image of $\N$ under the \emph{Hardy field function}\footnote{Any subfield of the ring of germs at $+\infty$ of continuous real-valued functions on $\R$ which is closed under differentiation is a \emph{Hardy field}, and its members are \emph{Hardy field functions}.} $x \mapsto x^\al$, it is known to be a so-called \emph{set of multiple recurrence} in ergodic theory (see \cite{frantzikinakiswierdl}); thus, for example, every $E \subseteq \N$ with $\limsup_{N \to \infty} \big| E \cap \{1,\ldots,N\} \big| \big/ N > 0$ contains arbitrarily long arithmetic progressions with step size in $\psa$. That $\psa$ is a set of multiple recurrence follows from it containing ``many divisible polynomial patterns'' (see \cite{frantzikinakiswierdl}, Section 5); in particular, when $1 < \al < 2$, the set $\psa$ contains arbitrarily long arithmetic progressions and arithmetic progressions of every sufficiently large step.

Another class of sets which enjoy strong recurrence properties are those which possess \emph{$\ip$-structure}. A \emph{finite sums} set in $\N$ is a set of the form
\begin{align}\label{eqn:fsdef}\fs {(x_i)_{i=1}^n} = \left\{ \sum_{i \in I} x_i \ \middle| \ I \subseteq \{1, \ldots, n\}, \ I \neq \emptyset \right\},\end{align}
where $(x_i)_{i=1}^n \subseteq \N$, and a set $A \subseteq \N$ is called $\ipzero$ if it contains arbitrarily large finite sums sets. Finite sums sets define ``linear'' $\ip$-structure; we define a higher order analogue, $\vip$-structure, in Section \ref{sec:remarks}. Sets with $\vip$-structure are known to be sets of multiple recurrence in both topological and measure-theoretic dynamical systems; see, for example, \cite{FK,BM,BMultrafilter,BFM}.

It would be interesting, therefore, to identify $\ip$-structure in sequences arising from Hardy field functions, and the Piatetski-Shapiro sequences provide an ideal first candidate in this search: they are easily described and already known to form sets of multiple recurrence. We elaborate on this further in Section \ref{sec:remarks}.

In attempting to find $\ip$-structure, a more basic question arises that does not seem to have been addressed in the literature: which linear equations are solvable in $\psa$?  In some cases, this question can already be easily answered. For example, for all $1 < \al < 2$, because $\psa$ contains arbitrarily long arithmetic progressions, it contains infinitely many solutions to balanced, homogeneous linear equations: $\mathbf{a} \cdot \mathbf{x} = 0$ where $\sum_i a_i = 0$. Because $\psa$ contains progressions of every sufficiently large step, it contains solutions to linear equations such as $x+y=z$, as well.

Whether other simple linear equations, such as $y=2x$, are solvable in $\psa$ does not follow from the aforementioned results. Theorem \ref{thm:maintheorem} serves as a partial answer to the question of which bivariate linear equations are solvable in $\psa$. As a corollary, we find sets of the form $\fs {(x_i)_{i=1}^3}$ in $\psa$ for Lebesgue-a.e. $1 < \al < 2$; this is a famous open problem in the set of squares, $\ps 2$.

\subsection*{Notation} For $x \in \R$, denote the distance to the nearest integer by $\fracdist x$, the fractional part by $\fracpart x$, the integer part (or floor) by $\bothfloor x$, and the ceiling by $\bothceil x := -\bothfloor {-x}$. Denote the Lebesgue measure on $\R$ by $\lambda$, and denote the set of those points belonging to infinitely many of the sets in the sequence $(E_n)_{n \in \N}$ by $\limsup_{n \to\infty} E_n$. Given two positive-valued functions $f$ and $g$, we write $f \ll_{a_1,\ldots,a_k} g$ or $g \gg_{a_1,\ldots,a_k} f$ if there exists a constant $K > 0$ depending only on the quantities $a_1, \ldots, a_k$ for which $f(x) \leq K g(x)$ for all $x$ in the domain common to both $f$ and $g$.

\subsection*{Acknowledgements} The author gives thanks to Professor Vitaly Bergelson for the inspiration for this work and to the referee for several helpful suggestions.

\section{Reduction to Diophantine approximation}\label{sec:reduction} 

We prove Theorem \ref{thm:maintheorem} by reducing it to the following theorem in Diophantine approximation.

\begin{theorem}\label{thm:firstDAform}
Let $I \subseteq [0,1)$ be a set with non-empty interior, $a, c > 0$, $a \neq 1$, and $\ga \in \R \setminus \{0\}$. For Lebesgue-a.e. $\al > 1$, the system
\begin{align}\label{eqn:systemone}\begin{dcases} \fracdist {a^{1/\al}n} \leq \frac{c}{n^{\al-1}} \\ \fracpart {\ga n^\al} \in I \end{dcases}\end{align}
is solvable or unsolvable in $\N$ according as $\al < 2$ or $\al > 2$.
\end{theorem}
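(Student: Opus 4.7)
My plan is to apply a Borel-Cantelli analysis to the sets
\[ E_n = \bigl\{\alpha > 1 : \fracdist{a^{1/\alpha} n} \leq c/n^{\alpha-1} \text{ and } \fracpart{\gamma n^\alpha} \in I\bigr\}, \]
so that (\ref{eqn:systemone}) is solvable at $\alpha$ precisely when $\alpha \in \limsup_{n\to\infty} E_n$. The central geometric observation is that, because $a \neq 1$, the map $\alpha \mapsto a^{1/\alpha} n$ is strictly monotone on any compact $[A,B] \subset (1,\infty)$ with derivative of absolute value $\gg_{a,A,B} n$; a change-of-variables calculation then shows that the Diophantine set $E_n^{(1)} := \{\alpha \in [A,B] : \fracdist{a^{1/\alpha}n} \leq c/n^{\alpha-1}\}$ decomposes into $\asymp_{a,A,B} n$ disjoint short intervals, each of length of order $c/n^\alpha$ for the $\alpha$ in that component, with total Lebesgue measure controlled in terms of $c\int_A^B n^{1-\alpha}\,d\alpha$.

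For part (ii), fix $[A,B] \subset (2,\infty)$. Then $\lambda(E_n \cap [A,B]) \leq \lambda(E_n^{(1)} \cap [A,B]) \ll c/n^{A-1}$ is summable since $A-1>1$, and the first Borel-Cantelli lemma gives $\lambda(\limsup_n E_n \cap [A,B]) = 0$. A countable cover of $(2,\infty)$ by such intervals yields unsolvability of (\ref{eqn:systemone}) for Lebesgue-a.e.\ $\alpha > 2$.

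For part (i), fix $[A,B] \subset (1,2)$. Within each short component of $E_n^{(1)} \cap [A,B]$, the function $\alpha \mapsto \gamma n^\alpha$ has derivative of magnitude $|\gamma| n^\alpha \log n$, so $\fracpart{\gamma n^\alpha}$ executes roughly $c|\gamma|\log n$ full cycles across the component. Since $I$ contains an open interval, a direct change-of-variables estimate shows that the proportion of each component satisfying $\fracpart{\gamma n^\alpha} \in I$ is bounded below by a positive constant depending only on $I$ for all sufficiently large $n$. Summing over components gives $\lambda(E_n \cap [A,B]) \gg_{I,a,A,B} c\int_A^B n^{1-\alpha}\,d\alpha$, so $\sum_n \lambda(E_n \cap [A,B]) = \infty$.

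The main obstacle is now to establish sufficient quasi-independence of the $E_n$ to conclude via a divergent form of Borel-Cantelli. For distinct $m, n$, I would decompose $E_m^{(1)} \cap E_n^{(1)}$ according to which pair of rationals $p/n, q/m$ witnesses membership: the coincident contributions $p/n = q/m$ produce single short intervals, while the remaining contributions are controlled using the separation $|p/n - q/m| \geq 1/(mn)$. A careful count, together with the equidistribution above, should yield $\lambda(E_m \cap E_n \cap [A,B]) \ll \lambda(E_m \cap [A,B])\,\lambda(E_n \cap [A,B]) + (\text{lower order})$, after which the Kochen-Stone lemma delivers $\lambda(\limsup_n E_n \cap J) \gg \lambda(J)$ uniformly for every subinterval $J \subseteq [A,B]$. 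The Lebesgue density theorem then upgrades this to full measure in $[A,B]$, and a countable exhaustion of $(1,2)$ completes the proof.
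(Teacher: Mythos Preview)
Your plan is on the right track and mirrors the paper's architecture (first Borel--Cantelli for $\alpha>2$; divergence plus quasi-independence and Kochen--Stone, then Lebesgue density, for $\alpha<2$), but it diverges from the paper in two substantive ways that are worth flagging.

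First, the paper does not work in the $\alpha$-variable. It performs the change of variables $\theta=a^{1/\alpha}$ (equivalently $\alpha=t_a(\theta)=(\log_a\theta)^{-1}$), which turns the Diophantine condition into $\|\theta n\|\le\psi(n)$. This makes the components of your $E_n^{(1)}$ into intervals of equal length centered at the equally spaced rationals $m/n$, so all subsequent counting is clean. Your decision to stay in $\alpha$ is not wrong---the map is a diffeomorphism on compacta---but the components are then centered at $\alpha_p=\log a/\log(p/n)$, with position-dependent lengths, and every overlap estimate picks up Jacobian factors. The paper's substitution buys a lot of bookkeeping.

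Second, your divergence argument differs from the paper's, and is in fact slicker. You keep the full threshold $c/n^{\alpha-1}$, observe that $\alpha\mapsto\gamma n^\alpha$ sweeps through $\asymp c|\gamma|\log n\to\infty$ full periods across each component, and conclude that a fixed proportion of each component meets $I$. The paper instead shrinks to $\psi(n)=1/n$, so that the twist is \emph{essentially constant} on each component; it then needs a separate equidistribution lemma (a van der Corput/Weyl estimate for $m\mapsto\gamma n^{t_a(m/n)}$) to show that a positive proportion of \emph{centers} land in $I$. Your route avoids that lemma.

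The place where your sketch is thin is quasi-independence. The paper restricts the index to primes $p$ and invokes a lattice-point lemma bounding $\#\{(q,r,s):|sp-rq|<L\}$; primality kills the ``coincident'' contributions $r/p=s/q$ outright and makes the error in the count uniform. Your remark that coincident pairs ``produce single short intervals'' understates the issue: for general $m,n$ there are $\asymp\gcd(m,n)$ such coincidences, and the non-coincident separation $|p/n-q/m|\ge 1/(mn)$ is, for $\alpha<2$, \emph{small} compared with your component lengths $c/n^{\alpha}$, so it does not by itself limit overlaps. A genuine lattice-point count is needed. It can be carried out over all integers (the $\gcd$-weighted diagonal sums turn out to be lower order against $\bigl(\sum_n\lambda(E_n)\bigr)^2$), but you should either do that explicitly or, as the paper does, pass to primes.
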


This theorem can be seen as ``twisted'' Diophantine approximation. Indeed, when $\al - 1 < 1$, the first inequality in (\ref{eqn:systemone}) is solvable in $\N$ by Dirichlet's Theorem; more concretely, any sufficiently large denominator of a continued fraction convergent of $a^{1/\al}$ will yield a solution. The second condition in (\ref{eqn:systemone}) provides the twist.

\begin{proof}[Proof of Theorem \ref{thm:maintheorem} assuming Theorem \ref{thm:firstDAform}]
Note that (\ref{eqn:main}) is solvable in $\N$ if and only if
\begin{align*}a, b \in \Q, \ a = \frac{a_1}{a_2}, \ a_1, a_2 \in \N, \ (a_1, a_2) = 1, \ \text{and } a_2b \in \Z.\end{align*}
Combined with the fact that consecutive differences in $\psa$ tend to infinity, the theorem holds when $a = 1$.  Assume $a \neq 1$. Note that there exists an integer $0 \leq d \leq a_2-1$ for which 
\[a \intpart {n^\al} + b\in \Z \iff \intpart{n^\al} \equiv d \pmod {a_2} \iff \fracpart {\frac {n^\al}{a_2}} \in \left[ \frac{d}{a_2}, \frac{d+1}{a_2} \right).\]
It follows that
\begin{align}
\notag a \intpart {n^\al} + b\in \psa \iff & \exists \ k \in \N, \ a \intpart {n^\al} + b = \intpart{k^\al}\\
\notag \iff & \begin{cases} a \intpart {n^\al} + b \in \Z \ \text{ and } \\ \exists \ k \in \N, \ a \intpart {n^\al} + b \leq k^\al < a \intpart {n^\al} + b + 1 \end{cases} \\
\label{eqn:suffandnecconditions}\iff & \fracpart {\frac {n^\al}{a_2}} \in \left[ \frac{d}{a_2}, \frac{d+1}{a_2} \right) \ \text{ and } J_n \cap \N \neq \emptyset,
\end{align}
where, by the Mean Value Theorem,
\begin{align*}J_n &= \left[ \left(a \intpart {n^\al} + b\right)^{1/ \al},\left(a \intpart {n^\al} + b + 1\right)^{1/ \al} \right) = a^{1/ \al}n + [L_n,R_n),\\
L_n &= -\frac a\al \left(\fracpart {n^\al} - \frac ba \right) l_n^{-1 + 1/\al}, \ \ l_n \text{ between } a n^\al \text{ and } a \intpart {n^\al} + b,\\
R_n &= \frac a\al \left( \frac {b+1}a - \fracpart {n^\al} \right) r_n^{-1 + 1/\al}, \ \ r_n \text{ between } a n^\al \text{ and } a \intpart {n^\al} + b+1.\end{align*}
Note that $J_n, L_n, R_n, l_n,$ and $r_n$ all depend on $\al$. This shows so far that (\ref{eqn:main}) is solvable in $\psa$ if and only if the system in (\ref{eqn:suffandnecconditions}) is solvable in $\N$.

We proceed by showing that solutions to (\ref{eqn:systemone}) yield solutions to (\ref{eqn:suffandnecconditions}) and vice versa when $I$, $c$, and $\ga$ are chosen appropriately. To this end, for $i=1,2$, let
\begin{align*}A &= \{\al > 1 \ | \ (\ref{eqn:suffandnecconditions}) \text{ is solvable in } \N \},\\
B_i &= \{\al > 1 \ | \ (\ref{eqn:systemone}) \text{ is solvable in } \N \text{ for } I_i, c_i, \ga_i, a \}.\end{align*}
To prove Theorem \ref{thm:maintheorem}, it suffices by Theorem \ref{thm:firstDAform} to find $I_i, c_i, \ga_i$, $i=1,2$, for which
\begin{align}\label{eqn:containmenttoprove}B_1 \cap (1,2) \subseteq A \subseteq B_2.\end{align}

We begin with the first containment in (\ref{eqn:containmenttoprove}), which we will show under the assumption that $0 \leq b < a$. Let $I_0$ be the middle third sub-interval of the interval $\big (b/a, \min(1,(b+1)/a) \big)$. Let $I_1 = d/a_2 + I_0/a_2$, $\ga_1 = 1/a_2$, and $c_1$ be a constant depending only on $a$ and $b$ to be specified momentarily.

Suppose $\al \in B_1 \cap (1,2)$ and that $n$ is a solution to (\ref{eqn:systemone}); we will show that if $n$ is sufficiently large, then it solves the system in (\ref{eqn:suffandnecconditions}). By (\ref{eqn:systemone}),
\[\fracpart {\frac {n^\al}{a_2}} = \fracpart {\ga_1 n^\al} \in I_1 \subseteq \left[ \frac d{a_2}, \frac {d+1}{a_2} \right).\]
This also implies that $\fracpart {n^\al} \in I_0$, so
\[\fracpart {n^\al} - \frac ba \gg_{a,b} 1, \quad \frac {b+1}a - \fracpart {n^\al} \gg_{a,b} 1.\]
Combining these estimates with the facts that $\al \in (1,2)$ and, for $n$ sufficiently large, $a \intpart {n^\al} + b + 1 \leq 2 a n^\al$, we get
\begin{align*}
-L_n &= \frac a\al \left(\fracpart {n^\al} - \frac ba \right) l_n^{-1 + 1/\al} \gg_{a,b} \frac 1{n^{\al-1}},\\
R_n &= \frac a\al \left( \frac {b+1}a - \fracpart {n^\al} \right) r_n^{-1 + 1/\al} \gg_{a,b} \frac 1{n^{\al-1}}.
\end{align*}
Set $c_1$ to be half the minimum of the constants implicit in these two expressions. It follows that $J_n$ contains an open interval centered at $a^{1/\al}n$ of length $2 c_1 \big / n^{\al - 1}$. By (\ref{eqn:systemone}), $\fracdist {a^{1/\al}n} \leq c_1 \big/ n^{\al-1}$, so the interval $J_n$ contains the nearest integer to $a^{1/\al}n$; in particular, $J_n \cap \N \neq \emptyset$, so $n$ solves (\ref{eqn:suffandnecconditions}).

The second containment in (\ref{eqn:containmenttoprove}) is handled similarly. Let $I_2 = [0,1)$, $\ga_2 = 1$, and $c_2$ be a constant depending only on $a$ to be specified momentarily. Suppose that $\al \in A$ and $n$ solves (\ref{eqn:suffandnecconditions}); we will show that $n$ satisfies (\ref{eqn:systemone}).  The second condition in (\ref{eqn:systemone}) is satisfied automatically by our choice of $I_2$. For $n$ sufficiently large, $a \intpart {n^\al} + b \geq a n^\al \big / 2$, whereby $|L_n|, |R_n| \leq c_2 \big/ 2 {n^{\al-1}}$, where $c_2$ is chosen (depending only on $a$) to satisfy both inequalities. Since $J_n$ contains an integer, it must be that $\left\|a^{1/\al}n\right\| \leq c_2 \big/ n^{\al-1}$, meaning $n$ satisfies (\ref{eqn:systemone}).
\end{proof}

\section{Proof of Theorem \ref{thm:firstDAform}} 

To prove Theorem \ref{thm:firstDAform}, we first change variables under $\varchange ax = (\log_ax)^{-1}$ to arrive at the equivalent Theorem \ref{thm:secondDAform}.  Proof of the equivalence of these two theorems is a routine exercise using the fact that $\varchangeempty a$ is measure-theoretically non-singular. The lemmas used in the proof of the following theorem may be found in Section \ref{sec:supportinglemmata}.

\begin{theorem}\label{thm:secondDAform} 
Let $I \subseteq [0,1)$ be a set with non-empty interior, $a, c > 0$, $a \neq 1$, and $\ga \in \R \setminus \{0\}$. If $a < 1$, then for Lebesgue-a.e. $a < \theta < 1$, the system 
\begin{align}\label{eqn:systemtwo}\begin{dcases} \fracdist {\theta n} \leq \frac{c}{n^{\varchange a\theta -1}} \\ \fracpart {\ga n^{\varchange a\theta }} \in I \end{dcases}\end{align}
is solvable or unsolvable in $\N$ according as $\theta < \sqrt a$ or $\theta > \sqrt a$. If $a > 1$, then for Lebesgue-a.e. $1 < \theta < a$, the system is solvable or unsolvable in $\N$ according as $\theta > \sqrt a$ or $\theta < \sqrt a$.
\end{theorem}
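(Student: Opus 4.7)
The plan is to attack both halves of the theorem via Borel--Cantelli arguments applied to the sets
\[E_n := \left\{\theta : \fracdist{\theta n} \leq c/n^{\varchange a\theta -1} \text{ and } \fracpart{\ga n^{\varchange a\theta}} \in I \right\};\]
the set of $\theta$ at which the system is solvable is precisely $\limsup_{n \to \infty} E_n$.  I describe the case $a < 1$; the case $a > 1$ is symmetric after swapping the roles of the two halves.

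\emph{Unsolvability half.}  Fix any closed subinterval $J \subseteq (\sqrt a, 1)$; on $J$ we have $\varchange a\theta - 1 \geq 1+\eps$ for some $\eps = \eps(J) > 0$.  Discarding the second condition entirely, and using that $\{\theta \in J : \fracdist{\theta n} \leq \de\}$ has Lebesgue measure $\ll \de\,|J|$ for $\de \leq 1/2$,
\[\lambda(E_n \cap J) \;\leq\; \lambda\!\left(\left\{\theta \in J : \fracdist{\theta n} \leq c/n^{1+\eps}\right\}\right) \;\ll_J\; 1/n^{1+\eps},\]
which is summable.  The convergence half of Borel--Cantelli then yields $\lambda(\limsup_n E_n \cap J) = 0$, and exhausting $(\sqrt a, 1)$ by such $J$ finishes the unsolvability half.

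\emph{Solvability half.}  The goal is $\lambda(\limsup_n E_n \cap J) = |J|$ for every closed $J \subseteq (a, \sqrt a)$.  First, letting $\beta = \sup_J \varchange a\theta < 2$, the first condition alone contributes measure $\gtrsim c|J|/n^{\beta-1}$ to $E_n\cap J$; since $\beta - 1 < 1$ this diverges when summed in $n$.  To pass from the first condition to both, I would invoke an equidistribution statement of the type presumably appearing in Section \ref{sec:supportinglemmata}: within each of the $\sim n|J|$ short arcs making up the first-condition set, the value $\fracpart{\ga n^{\varchange a\theta}}$ is nearly uniformly distributed mod $1$, so that it falls in $I$ on a proportion $\gg |I|$ of them.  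This is plausible because $\theta \mapsto \ga n^{\varchange a\theta}$ is smooth and monotone in $\theta$ on $J$ with a derivative of size $\gg n^{\varchange a\theta}\,\log n$, huge compared to the arc length $c/n^{\varchange a\theta}$.  With this in hand, $\sum_n \lambda(E_n \cap J) = \infty$.

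Second, I would prove a quasi-independence bound
\[\lambda(E_n \cap E_m \cap J) \;\ll\; \lambda(E_n \cap J)\,\lambda(E_m \cap J)\]
for $n \neq m$ beyond a negligible diagonal term.  The linear part reduces to counting pairs of integers $(k,l)$ with $|\theta - k/n|$ and $|\theta - l/m|$ both small; the twist requires a two-variable equidistribution of $(\fracpart{\ga n^{\varchange a\theta}},\fracpart{\ga m^{\varchange a\theta}})$, once again controlled by smoothness of $\varchangeempty a$ via a van der Corput / integration-by-parts estimate.  Divergence of $\sum_n \lambda(E_n \cap J)$ together with this bound feeds into the variance (Chung--Erd\H{o}s) form of Borel--Cantelli to give $\lambda(\limsup_n E_n \cap J') > 0$ uniformly for every subinterval $J' \subseteq J$, and the Lebesgue density theorem then promotes this to full measure on $J$.

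The main obstacle is the joint correlation estimate.  The first coordinate $\fracdist{\theta n}$ is linear in $\theta$ while the second is governed by the highly oscillatory $n^{\varchange a\theta}$, and one must wring enough uniformity out of the monotonicity and growth of $\varchangeempty a$ on $J$ to handle the two conditions simultaneously, independent of $n,m$.  This is also what pins down the specific exponent $\varchange a\theta - 1$ in the first inequality: it is the precise threshold at which $\sum_n \lambda(E_n \cap J)$ swings between convergence and divergence at $\theta = \sqrt a$.
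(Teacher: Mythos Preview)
Your overall architecture matches the paper's: Borel--Cantelli convergence for the unsolvability half, and Chung--Erd\H{o}s type divergence Borel--Cantelli plus a density/Lebesgue-point argument for the solvability half. The unsolvability argument is essentially identical to the paper's.

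For the solvability half, however, you over-engineer the overlap estimate. You propose controlling $\lambda(E_n\cap E_m)$ via a \emph{two-variable} equidistribution of $(\{\ga n^{\varchange a\theta}\},\{\ga m^{\varchange a\theta}\})$. The paper avoids this entirely: writing $G_n = E_n'\cap F_n$ where $E_n'$ is the set cut out by the first (Diophantine) condition alone and $F_n$ by the twist, it uses the one-variable equidistribution only in the \emph{lower} bound $\lambda(G_n)\gg(\theta_2-\theta_1)\psi(n)$, and for the \emph{upper} bound on overlaps simply discards the twist via the trivial inclusion $G_p\cap G_q\subseteq E_p'\cap E_q'$. The overlap then reduces to a purely arithmetic count of pairs $(r,s)$ with $|sp-rq|$ small, handled by a lattice-point lemma. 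So the ``main obstacle'' you flag is not actually there.

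Two further simplifications you miss. First, the paper replaces the $\theta$-dependent approximation function $c/n^{\varchange a\theta -1}$ by $\psi(n)=1/n$: on any $(\theta_1,\theta_2)\subset(a,\sqrt a)$ one has $\varchange a\theta<2$, so $1/n\leq c/n^{\varchange a\theta -1}$ for large $n$, and working with a $\theta$-independent $\psi$ keeps the arc structure of $E_n'$ clean. Second, the paper restricts to primes $p$ rather than all $n$; this is what makes the lattice-point count tractable (the relevant lemma needs coprimality, and summing over primes gives a clean dyadic decomposition with the right error term $K\psi(p)$ per scale). Your pairwise bound $\lambda(E_n\cap E_m)\ll\lambda(E_n)\lambda(E_m)$ would fail as stated for general $n,m$ with large common factor; the paper instead proves a summed inequality $\sum_{q>p}^N\lambda(E_p'\cap E_q')\ll(\theta_2-\theta_1)\sum_{q>p}^N\psi(p)\psi(q)+K\psi(p)$ and shows this suffices.
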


\begin{proof} Fix $I$, $a$, $c$, and $\ga$. Suppose $a < 1$; the case $a > 1$ follows from the proof below with the obvious modifications. Without loss of generality, we may assume that $I$ is an interval with non-empty interior. For brevity, we will suppress dependence on $I$, $a$, $c$, and $\ga$ in the asymptotic notation appearing in the proof. 

Let $\Theta \subseteq (a,1)$ be the set of those $\theta$ satisfying the conclusion of the theorem. We will show that $\Theta$ is of full Lebesgue measure by showing that it has full measure in the intervals $(a,\sqrt a)$ and $(\sqrt a,1)$.

To show that $\Theta \cap (a,\sqrt a)$ is of full measure, it suffices by Lemma \ref{lem:densitylemma} to show that there exists a $\de > 0$ such that for all $a < \intone < \inttwo < \sqrt a$,
\begin{align}\label{eqn:biggerthandelta}\lambda \left(\Theta \cap (\intone,\inttwo)\right) \geq \de (\inttwo - \intone).\end{align}
To this end, fix $a < \intone < \inttwo < \sqrt a$. In what follows, the phrase ``for all sufficiently large $n$'' means ``for all $n \geq n_0$,'' where $n_0 \in \N$ may depend on any of the parameters introduced so far, including $\intone$ and $\inttwo$.

For $n \in \N$, define
\begin{align*}
E_n &= \left\{ \theta \in (\intone,\inttwo) \ \middle | \ \fracdist {\theta n} \leq \psi(n) \right\}, & \psi (n) &= \frac{1}{n},\\
F_n &= \left\{ \theta \in (\intone,\inttwo) \ \middle | \ \fracpart {\ga n^{\varchange a\theta }} \in I \right\}, & G_n &= E_n \cap F_n.
\end{align*}
For $\theta \in (\intone,\inttwo)$, $\varchange a\theta < \varchange a\inttwo < 2$, so for sufficiently large $n$, we have $\psi(n)\leq c \big/ n^{\varchange a\theta -1}$. It follows that $\limsup_{n \to\infty}G_n \subseteq \Theta \cap (\intone, \inttwo)$. Therefore, in order to show (\ref{eqn:biggerthandelta}), it suffices to prove that there exists a $\de > 0$, independent of $\intone, \inttwo$, for which
\begin{align}\label{eqn:mainboundonlimsup}\lambda \left( \limsup_{\substack{p \to \infty\\ p \text{ prime}}} G_p \right) \geq \de (\inttwo - \intone).\end{align}
Passing to primes here makes parts of the argument technically easier. To ease notation, any sum indexed over $p$ or $q$ will be understood to be a sum over prime numbers.

To prove (\ref{eqn:mainboundonlimsup}), it suffices by Lemma \ref{lem:advancedBC} to prove that
\begin{align}\label{eqn:Gpsarebigenough} \sum_{p = 2}^\infty \lambda(G_p) = \infty\end{align}
and that there exists a $\de > 0$, independent of $\intone, \inttwo$ for which
\begin{align}\label{eqn:Gpsareindependent} \limsup_{N \to \infty} \left( \sum_{p = 2}^N \lambda(G_p) \right)^2 \left( \sum_{p,q = 2}^N \lambda(G_p \cap G_q) \right)^{-1} \geq \de (\inttwo - \intone).\end{align}

First we show (\ref{eqn:Gpsarebigenough}) using Lemma \ref{lem:equid}. Fix $0 < \eta < \min \big( \intone, 1-\inttwo, (\inttwo-\intone)/3 \big)$. For $n \in \N$, let
\begin{align}\label{eqn:defoftn}
\begin{split} S_n &= \big\{ m \in \Z \ \big| \ \intone + \eta < m / n < \inttwo-\eta \big\},\\
T_n &= \big\{ m \in \Z \ \big| \ \intone - \eta < m / n < \inttwo+\eta \big\}, \end{split}
\end{align}
and note that for $n$ sufficiently large,
\begin{align}\label{eqn:boundsonsets} (\inttwo-\intone)n \ll |S_n| < |T_n| \ll (\inttwo-\intone)n. \end{align}
For $n$ sufficiently large, the set $E_n$ may be approximated by a disjoint union of intervals:
\[\bigcup_{m \in S_n} E_{n,m} \subseteq E_n \subseteq \bigcup_{m \in T_n} E_{n,m},\]
where
\[E_{n,m} = \left[ \frac mn - \frac{\psi(n)}{n}, \frac mn + \frac{\psi(n)}{n} \right]. \]
Let $I_0 \subseteq I$ be the middle third sub-interval of $I$.

\begin{claim}\label{lem:inInaughtimpliesinI}
For $n$ sufficiently large and $m \in S_n$, if $\fracpart {\ga n^{\varchange a{m/n} }} \in I_0$, then $E_{n,m} \subseteq F_n$.
\end{claim}

\begin{proof}
Note first that $\left|\varchangeprime ax \right| \ll 1$ for $x \in (a, \sqrt a)$. For $\theta \in E_{n,m}$, by applying the Mean Value Theorem twice, we see that for some $\xi_1$ between $\varchange a\theta$ and $\varchange a{m/n}$ and some $\xi_2$ between $\theta$ and $m/n$,
\begin{align*}
\left|\ga n^{\varchange a\theta } - \ga n^{\varchange a{m/n} } \right| &= \left| \ga \right| \left| \varchange a\theta - \varchange a{m/n} \right| \log n \ n^{\xi_1} \\
&\leq |\ga| \left| \theta - \frac mn \right| \big| \varchangeprime a{\xi_2}\big| \log n \ n^{\varchange a\inttwo}\\
&\ll \frac{\psi(n)}n \log n \ n^{\varchange a\inttwo}\\
&\leq \frac{\log n}{n^{2 - \varchange a\inttwo}}.
\end{align*}
Since $2 - \varchange a\inttwo > 0$, for $n$ sufficiently large,
\[\left|\ga n^{\varchange a\theta } - \ga n^{\varchange a{m/n} } \right| < \frac{\lambda(I)}{3}.\]
Therefore, if $\fracpart {\ga n^{\varchange a{m/n} }} \in I_0$, then for all $\theta \in E_{n,m}$, $\fracpart {\ga n^{\varchange a\theta }} \in I$, i.e. $E_{n,m} \subseteq F_n$.
\end{proof}

By the equidistribution result in Lemma \ref{lem:equid}, for $n$ sufficiently large,
\[\frac{\left| \left\{ m \in S_n \ \middle| \ \fracpart {\ga n^{\varchange a{m/n} }} \in I_0 \right\} \right|}{(\inttwo-\intone-2\eta)n} \geq \frac{\lambda(I_0)}{2}.\]
Combining this with Claim \ref{lem:inInaughtimpliesinI} and the bounds on $\eta$, there are $\gg (\inttwo-\intone)n$ integers $m \in S_n$ for which $E_{n,m} \subseteq F_n$. It follows that for $n$ sufficiently large,
\begin{align}\label{eqn:gnisbig}\lambda(G_n) \gg (\inttwo - \intone)n \frac{\psi(n)}{n} = (\inttwo - \intone) \frac 1{n}.\end{align}
This proves (\ref{eqn:Gpsarebigenough}).

Now we show (\ref{eqn:Gpsareindependent}) by estimating the ``overlaps'' between the $G_p$'s. It suffices to prove that there exists a constant $K$ (which may depend on any of the parameters introduced so far) such that for all sufficiently large primes $p$ and for all $N \geq p$,
\begin{align}\label{eqn:sufficientupperboundcondition}
\sum_{q > p}^N \lambda (E_p \cap E_q) \ll (\inttwo-\intone) \sum_{q > p}^N \psi(p) \psi(q) + K \psi(p).
\end{align}
Indeed, suppose (\ref{eqn:gnisbig}) and (\ref{eqn:sufficientupperboundcondition}) both hold for all primes $p$ greater than some sufficiently large $p_0 \in \N$. Using the trivial bound $\lambda(G_p \cap G_q) \leq \lambda(E_p \cap E_q)$, it follows that
\begin{align*}
\sum_{p, q = 2}^N \lambda(G_p \cap G_q) &\leq 2\left( \sum_{\substack{p \geq p_0 \\ q > p}}^N \lambda(G_p \cap G_q) + \sum_{\substack{p < p_0 \\ q > p}}^N \lambda(G_q) \right) + \sum_{q = 2}^N \lambda(G_q) \\
&\ll (\inttwo-\intone)\sum_{\substack{p \geq p_0 \\ q > p}}^N \psi(p) \psi(q)  + K \sum_{p\geq p_0}^N \psi(p) + \sum_{\substack{p < p_0 \\ q \geq p}}^N \lambda(G_q)\\
&\ll \frac 1{\inttwo-\intone} \sum_{p, q = 2}^N \lambda(G_p) \lambda(G_q)  + p_0 \left(\frac{K}{\inttwo-\intone}+1\right) \sum_{q = 2}^N \lambda(G_q),
\end{align*}
where the last line follows from (\ref{eqn:gnisbig}). This combines with (\ref{eqn:Gpsarebigenough}) to yield (\ref{eqn:Gpsareindependent}).

To show (\ref{eqn:sufficientupperboundcondition}), note that the set $E_p$ is covered by a union of intervals $E_{p,r}$, each of length $2 \psi(p) \big / p$. If $p < q$ and $E_{p,r} \cap E_{q,s} \neq \emptyset$, then by estimating the distance between the midpoints of the intervals,
\begin{align*}
\left| sp - rq \right| &< pq \ 3 \max \left( \frac{\psi(p)}p, \frac{\psi(q)}q \right) = 3 q \psi(p),\\
\lambda \left( E_{p,r} \cap E_{q,s} \right) &\leq \min \left( 2 \frac {\psi(p)}{p}, 2 \frac {\psi(q)}{q} \right) = 2 \frac {\psi(q)}{q}.
\end{align*}
The left hand side of (\ref{eqn:sufficientupperboundcondition}) is then
\begin{align*}
\sum_{q > p} \lambda(E_p \cap E_q) = \sum_{q > p} \sum_{\substack{r \in T_p \\ s \in T_q}} \lambda \left( E_{p,r} \cap E_{q,s} \right) \ll \sum_{q > p} \frac {\psi(q)}{q} \sum_{\substack{r \in T_p, \ s \in T_q \\ |sp-rq| < 3 q \psi(p)}} 1.
\end{align*}
Now (\ref{eqn:sufficientupperboundcondition}) will follow by partitioning the range of the sum on $q$ and applying Lemma \ref{lem:solutionstoeuclideanproblem}. Indeed, the right hand side of the previous expression is equal to
\begin{align*}
\sum_{\ell = 0}^\infty \sum_{\substack{2^\ell p < q < 2^{\ell+1} p \\ q \leq N}} \frac {\psi(q)}{q} \sum_{\substack{r \in T_p, \ s \in T_q \\ |sp-rq| < 3 q \psi(p)}} 1 &\leq \sum_{\ell = 0}^\infty \frac {\psi(2^\ell p)}{2^\ell p} \sum_{\substack{Q_\ell < q < \min(2Q_\ell, N+1) \\ r \in T_p, \ s \in T_q \\ |sp-rq| < L_\ell}} 1,
\end{align*}
where $L_\ell = 3\cdot 2^{\ell+1} p \psi(p)$ and $Q_\ell = 2^\ell p$. For each $\ell$, we apply Lemma \ref{lem:solutionstoeuclideanproblem} with $N$ and $p$ as they are, $Q_\ell$ as $Q$, $L_\ell$ as $L$, and $(\theta_1-\eta,\theta_2+\eta)$ as $(\eta_1,\eta_2)$: there exists a $K > 0$ depending only on $\theta_1, \theta_2$ (since $\eta$ depends only on $\intone$, $\inttwo$) such that the right hand side of the previous expression is
\begin{align*}
&\ll \sum_{\ell = 0}^\infty \frac {\psi(2^\ell p)}{2^\ell p} \left( (\theta_2-\theta_1 + 2\eta) L_\ell \sum_{\substack{2^\ell p < q < 2^{\ell+1} p \\ q \leq N}} 1 + K 2^\ell p \right)\\
&\ll (\theta_2-\theta_1)  \sum_{\ell = 0}^\infty \sum_{\substack{2^\ell p < q < 2^{\ell+1} p \\ q \leq N}} \psi(p) \psi(2^\ell p) + K \sum_{\ell = 0}^\infty \psi(2^\ell p)\\
&\leq (\theta_2-\theta_1)   \sum_{\ell = 0}^\infty \sum_{\substack{2^\ell p < q < 2^{\ell+1} p \\ q \leq N}} \psi(p) \psi(1/2) \psi(q) + K \psi(p) \sum_{\ell = 0}^\infty \psi(2^\ell)\\
&\ll  (\theta_2-\theta_1) \sum_{q > p}^N \psi(p) \psi(q) + K \psi(p),
\end{align*}
where the third line and fourth lines follow by noting that $\psi(2^\ell p) \leq \psi (q / 2)$, that $\psi$ is multiplicative, and that $\sum_{\ell = 0}^\infty \psi(2^\ell)$ converges. This shows (\ref{eqn:sufficientupperboundcondition}), completing the proof of (\ref{eqn:Gpsareindependent}).

To show that the set $\Theta \cap (\sqrt a,1)$ is of full measure, we will show that for all $\intthree > \sqrt a$, the set $(\intthree,1) \setminus \Theta$ has zero measure. Let
\[H_n = \left\{ \theta \in (\intthree,1) \ \middle | \ \fracdist {\theta n} \leq \frac c{n^{\varchange a\intthree - 1}} \right\}.\]
If $\theta \in (\intthree,1) \setminus \Theta$, then for infinitely many $n\in \N$, $\fracdist {\theta n} \leq c \big/ n^{\varchange a\intthree - 1}$. It follows that
\begin{align}\label{eqn:boundfromabovebylimsup}(\intthree,1) \setminus \Theta \subseteq \limsup_{n \to \infty} H_n.\end{align}
Since $H_n$ is a union of $\ll (1-\intthree)n$ intervals, each of length $2c \big/ n^{\varchange a\intthree}$, and since $\varchange a\intthree > 2$, $\sum_{n=1}^\infty \lambda(H_n) < \infty$.
By the first Borel-Cantelli Lemma, $\limsup_{n \to\infty}H_n$ has zero measure, so $(\intthree,1) \setminus \Theta$ has zero measure by (\ref{eqn:boundfromabovebylimsup}).
\end{proof}

\section{Supporting lemmata}\label{sec:supportinglemmata}  

Here we collect some supporting lemmata. Recall that $\lambda$ denotes the Lebesgue measure on $\R$ and that $\varchange ax = (\log_a x)^{-1}$.

\begin{lemma}[\cite{harmanbook}, Lemma 1.6]\label{lem:densitylemma}
Let $I \subseteq \R$ be an interval and $A \subseteq I$ be measurable.  If there exists a $\de > 0$ such that for every sub-interval $I_0 \subseteq I$, $\lambda (A \cap I_0) \geq \de \lambda (I_0)$, then $A$ is of full measure in $I$: $\lambda( I \setminus A ) = 0$.
\end{lemma}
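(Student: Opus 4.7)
The plan is to argue by contradiction using the Lebesgue density theorem: assume $\lambda(I \setminus A) > 0$, then manufacture a sub-interval $I_0 \subseteq I$ on which $A$ has relative measure strictly less than $\de$, contradicting the hypothesis.

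Concretely, by the Lebesgue density theorem, almost every point of $I \setminus A$ is a point of density one of $I \setminus A$. Since $\lambda(I \setminus A) > 0$ and the (at most two) boundary points of $I$ contribute zero measure, I can fix such a density point $x$ lying in the interior of $I$. Then for all sufficiently small $r > 0$, the symmetric interval $(x-r, x+r)$ lies inside $I$ and satisfies $\lambda\bigl((I \setminus A) \cap (x-r, x+r)\bigr) > (1 - \de) \cdot 2r$. Taking complements inside $(x-r, x+r)$ yields $\lambda\bigl(A \cap (x-r, x+r)\bigr) < \de \cdot \lambda\bigl((x-r, x+r)\bigr)$, which directly contradicts the hypothesis applied to $I_0 = (x-r, x+r)$.

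An alternative, more elementary route avoids the density theorem via outer regularity of $\lambda$. Given $\eps > 0$, choose an open set $U \supseteq I \setminus A$ with $\lambda(U) < \lambda(I \setminus A) + \eps$, decompose $U \cap I$ as a countable disjoint union of open sub-intervals $I_j \subseteq I$, apply the hypothesis to each to get $\lambda(A \cap I_j) \geq \de \lambda(I_j)$, and sum. This produces $\de \cdot \lambda(U \cap I) \leq \lambda(A \cap U) = \lambda(U \cap I) - \lambda(I \setminus A) \leq \eps$, and letting $\eps \to 0$ forces $\lambda(I \setminus A) = 0$.

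There is no real obstacle here: the only minor subtlety in the first approach is ensuring the density point can be chosen strictly inside $I$, which is automatic because the boundary of $I$ is negligible. I would present the Lebesgue-density argument for brevity, noting that the outer-regularity variant has the advantage of being self-contained.
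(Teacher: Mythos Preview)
Your proposal is correct. The paper does not give its own proof of this lemma; it simply cites it as Lemma~1.6 of Harman's book and moves on. Both of your arguments are standard and valid. The Lebesgue density route is the cleanest, and your handling of the one subtlety (choosing the density point in the interior of $I$) is fine. In the outer-regularity variant, the phrase ``open sub-intervals $I_j \subseteq I$'' is very slightly imprecise when $I$ has endpoints that lie in $U$ (the components of $U \cap I$ are then intervals relatively open in $I$, possibly half-open in $\R$), but since the hypothesis applies to \emph{all} sub-intervals of $I$, the argument goes through unchanged.
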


\begin{lemma}[\cite{harmanbook}, Lemma 2.3]\label{lem:advancedBC}
Let $(X,\mathcal{B},\mu)$ be a measure space with $\mu(X)<\infty$. If $(G_n)_{n \in \N} \subseteq \mathcal{B}$ is a sequence of subsets of $X$ for which $\sum_{n=1}^\infty \mu(G_n) = \infty$, then
\[\mu\left(\limsup_{n\to\infty} G_n \right) \geq \limsup_{N \to \infty} \left( \sum_{n=1}^N \mu(G_n) \right)^2 \left( \sum_{n,m=1}^N \mu(G_n \cap G_m) \right)^{-1}.\]
\end{lemma}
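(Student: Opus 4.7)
The plan is to prove this via the standard second-moment (Paley--Zygmund) approach applied to the counting function $S_N = \sum_{n=1}^N \mathbf{1}_{G_n}$. The first step is to compute its first two $L^1$-moments: integrating termwise gives $\int S_N \, d\mu = \sum_{n=1}^N \mu(G_n)$, while squaring and integrating gives $\int S_N^2 \, d\mu = \sum_{n,m=1}^N \mu(G_n \cap G_m)$. Both are finite because $\mu(X) < \infty$ and $S_N$ is a bounded sum of indicators.

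The second step is a one-line Cauchy--Schwarz estimate: since $S_N$ is supported exactly on $U_N := \bigcup_{n=1}^N G_n = \{S_N > 0\}$, we have
\[\left(\sum_{n=1}^N \mu(G_n)\right)^2 = \left(\int S_N \mathbf{1}_{U_N} \, d\mu\right)^2 \leq \mu(U_N) \int S_N^2 \, d\mu = \mu(U_N) \sum_{n,m=1}^N \mu(G_n \cap G_m).\]
Rearranging yields the bound $\mu(U_N) \geq \left(\sum_{n=1}^N \mu(G_n)\right)^2 \big/ \sum_{n,m=1}^N \mu(G_n \cap G_m)$ for every $N$, and in particular for the $\limsup$ in $N$.

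The third step is to upgrade this from $\bigcup_{n=1}^N G_n$ to $\limsup_n G_n$. For each fixed $N_0 \in \N$, apply the previous step to the shifted sequence $(G_n)_{n \geq N_0}$ (still summing to infinity) to obtain
\[\mu\bigg(\bigcup_{n \geq N_0} G_n\bigg) \geq \limsup_{N \to \infty} \frac{\bigl(\sum_{n=N_0}^N \mu(G_n)\bigr)^2}{\sum_{n,m=N_0}^N \mu(G_n \cap G_m)}.\]
Because $\sum \mu(G_n) = \infty$ and the finitely many dropped terms contribute an $O(1)$ shift to the numerator and an $O\bigl(\sum_{n=1}^N \mu(G_n)\bigr)$ shift to the denominator, the right-hand side equals the full limsup appearing in the lemma's statement. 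Finally, since $\mu(X) < \infty$, continuity of $\mu$ from above gives $\mu(\limsup_n G_n) = \lim_{N_0 \to \infty} \mu\bigl(\bigcup_{n \geq N_0} G_n\bigr)$, which yields the claim.

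I do not anticipate a real obstacle: the only slightly delicate point is verifying that truncating the initial segment of the sequence does not change the $\limsup$ on the right-hand side, which is immediate from the divergence hypothesis $\sum \mu(G_n) = \infty$ together with the trivial bound $\mu(G_n \cap G_m) \leq \mu(G_n)$ controlling the discarded cross-terms.
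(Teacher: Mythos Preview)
Your argument is correct and is precisely the standard Paley--Zygmund/Cauchy--Schwarz proof of this divergence Borel--Cantelli inequality; the paper does not supply its own proof but simply cites \cite{harmanbook}, Lemma~2.3, where the same argument appears. Your handling of the tail shift in Step~3 is fine: since $B_N := \sum_{n,m=1}^N \mu(G_n\cap G_m) \geq \big(\sum_{n=1}^N \mu(G_n)\big)^2/\mu(X)$ by the very inequality in Step~2, the discarded cross-terms (of order $O\big(\sum_{n=1}^N \mu(G_n)\big)$, as you note) are $o(B_N)$, so the ratio is asymptotically unchanged.
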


\begin{lemma}\label{lem:solutionstoeuclideanproblem}
Let $N, Q, p \in \N$, $p$ prime with $p \leq Q$, $L > 0$, and $0 < \eta_1 < \eta_2 < 1$. There exists a constant $K > 0$ depending only on $\eta_1$, $\eta_2$ such that the number of triples $(q,r,s) \in \N^3$ satisfying
\[Q < q < \min \big( 2Q, N+1 \big), \ q \text{ prime,} \quad \frac rp, \ \frac sq \in (\eta_1,\eta_2), \quad |sp-rq| < L,\]
is
\[\ll (\eta_2-\eta_1) L \sum_{\substack{ Q < q < \min ( 2Q, N+1 ) \\ q \text{ prime}}} 1 + KQ.\]
\end{lemma}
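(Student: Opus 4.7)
The plan is to parametrize each admissible triple by $(q, k)$ where $k := sp - rq$, reducing the count to a question about primes in arithmetic progressions modulo $p$ and applying the Brun--Titchmarsh inequality; a complementary regime $L \geq p/2$ is handled by a direct lattice-point count per $q$.

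First I would observe that $\gcd(p, q) = \gcd(r, p) = 1$: the former since $q$ is prime and $q > Q \geq p$; the latter since $0 < r < p$ and $p$ is prime. The integer $k := sp - rq$ is therefore nonzero (if $k = 0$, then $p \mid rq$ and hence $p \mid r$, a contradiction) and satisfies $|k| < L$. Given any pair $(q, k)$ with $q$ prime in $(Q, \min(2Q, N+1))$ and $0 < |k| < L$, the equation $sp - rq = k$ forces $r \equiv -k q^{-1} \pmod{p}$; since the interval $(\eta_1 p, \eta_2 p)$ has length less than $p$, at most one such $r$ lies in it, and then $s = (rq + k)/p$ is determined. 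Equivalently, fixing $r \in T := (\eta_1 p, \eta_2 p) \cap \Z$ (of size at most $(\eta_2 - \eta_1) p + 1$) and $k$ with $0 < |k| < L$, the prime $q$ is constrained to lie in the residue class $-k r^{-1} \pmod{p}$.

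Interchanging the order of summation and discarding the constraint $s \in (\eta_1 q, \eta_2 q)$ for an upper bound, I would bound the count of admissible triples by
\[
\sum_{r \in T}\ \sum_{0 < |k| < L} \bigl|\{q \text{ prime},\ Q < q < \min(2Q, N+1),\ q \equiv -kr^{-1}\!\!\pmod{p}\}\bigr|,
\]
and apply the Brun--Titchmarsh inequality, which gives an upper bound of order $Q/(\phi(p) \log(Q/p))$ for each inner count when $p < Q$. The resulting main term has order $(\eta_2 - \eta_1) L Q / \log(Q/p)$, which is absorbed into a constant multiple of $(\eta_2 - \eta_1) L \sum_q 1$ whenever $p$ is suitably smaller than $Q$ (say $p \leq Q^{1/2}$, so that $\log(Q/p)$ is a constant multiple of $\log Q$). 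Separately, in the regime $L \geq p/2$, a direct per-$q$ lattice-point count shows that the number of admissible $(r, s)$ pairs is at most $2(\eta_2 - \eta_1) L + (\eta_2 - \eta_1) p \leq 4(\eta_2 - \eta_1) L$, and summing over primes $q$ immediately yields the required main term.

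The principal obstacle would be providing a uniformly valid bound across the transitional regime $p \in (Q^{1/2}, Q]$ with $L < p/2$: the Brun--Titchmarsh bound degenerates as $\log(Q/p) \to 0$, while the direct per-$q$ count retains an undesirable $(\eta_2 - \eta_1) p$ boundary contribution. In this range I would fall back on the trivial bound $\lfloor Q/p \rfloor + 1 = O(1)$ on the number of integers of $(Q, 2Q]$ in any residue class modulo $p$, combined with a careful case split according to the size of $L$ relative to thresholds depending on $\eta_1$ and $\eta_2$. The $KQ$ term in the claimed bound is designed precisely to absorb these boundary contributions, with $K$ chosen to depend only on $\eta_1$ and $\eta_2$.
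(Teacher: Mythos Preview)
The paper's proof is a one-line citation to Lemma~6.2 of Harman's \emph{Metric Number Theory}, applied with $\mathscr{A}$ the set of primes in $(Q,\min(2Q,N+1))$ and the worst error absorbed into $KQ$. Your route---parametrizing by $k=sp-rq$ and bounding primes in each residue class modulo $p$ via Brun--Titchmarsh---is a genuinely different, hands-on attempt, and in the regime $p\le Q^{1/2}$ with $N\ge 2Q$ it does recover the main term correctly.

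There is, however, a concrete gap in the transitional regime $p\in(Q^{1/2},Q]$, $L<p/2$. Your proposed fix rests on the assertion $\lfloor Q/p\rfloor+1=O(1)$, which is false there: for $p\sim Q^{1/2}$ this quantity is of order $Q^{1/2}$. Feeding the correct trivial bound $Q/p+1$ into your $(r,k)$-sum gives at best
\[
\#\{(q,r,s)\}\le |T|\cdot 2L\cdot\Bigl(\frac{Q}{p}+1\Bigr)\asymp(\eta_2-\eta_1)\,LQ,
\]
whereas the target is $(\eta_2-\eta_1)L\sum_q 1+KQ$ with $\sum_q 1\ll Q/\log Q$; the discrepancy is a full factor of $\log Q$. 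When $L$ is large---say $L\sim p/4$ with $p\sim Q$---the overshoot is of order $(\eta_2-\eta_1)Q^{2}$, which the $KQ$ cushion cannot absorb for any $K=K(\eta_1,\eta_2)$. Brun--Titchmarsh fails for the same reason here, since $\log(Q/p)\to 0$ as $p\to Q$. The $(\eta_2-\eta_1)$ factor must appear in the main term with an \emph{absolute} implied constant (this is what ultimately makes $\delta$ in the density argument independent of $\theta_1,\theta_2$), so one cannot simply inflate the constant to swallow the loss. Harman's lemma handles the full range $p\le Q$ uniformly via a direct combinatorial overlap count rather than by estimating primes in individual progressions.
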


\begin{proof}
This lemma follows immediately from \cite{harmanbook}, Lemma 6.2, by putting the set of integers $\mathscr{A}$ to be those primes strictly between $Q$ and $\min ( 2Q, N+1 )$ and taking the worst error $KQ$.
\end{proof}

\begin{lemma}\label{lem:equid}
Let $a > 0$, $a \neq 1$, $\gamma \in \R \setminus \{0\}$, and $\min (\sqrt a, a) < \eta_1 < \eta_2 < \max (\sqrt a, a)$. Then
\[\frac{1}{n(\eta_2-\eta_1)} \sum_{\substack{m \in \Z \\ m/n \in (\eta_1,\eta_2)}} \de_{\fracpart {\gamma n^{\varchange a{m/n}}}} \longrightarrow \restr{\lambda}{[0,1)} \quad \text{weakly as } n \to \infty,\]
where $\de_{x}$ denotes the point mass at $x \in [0,1)$.
\end{lemma}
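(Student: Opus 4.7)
The plan is to apply Weyl's equidistribution criterion: to show weak convergence to $\restr{\lambda}{[0,1)}$, it suffices to show that for every non-zero integer $h$,
\[S_n(h) := \sum_{\substack{m \in \Z \\ m/n \in (\eta_1,\eta_2)}} e^{2\pi i h \gamma n^{\varchange{a}{m/n}}} = o(n) \quad \text{as } n \to \infty,\]
since the number of terms in $S_n(h)$ is $(\eta_2 - \eta_1)n + O(1)$. To bound $S_n(h)$, I invoke Van der Corput's classical second-derivative test: if $f \in C^2$ on an interval of length $L$ with $f''$ of constant sign and $r_1 \leq |f''| \leq r_2$ there, then $\sum e^{2\pi i f(m)} \ll L\sqrt{r_2} + r_1^{-1/2}$.

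Take $f(x) = h\gamma n^{\varchange{a}{x/n}}$ on $[\eta_1 n, \eta_2 n]$, and write $n^{\varchange{a}{u}} = e^{\beta(u)}$ with $\beta(u) = \log(a)\log(n)/\log(u)$. Direct differentiation yields
\[\beta'(u) = -\frac{\log(a)\log(n)}{u\log^2(u)}, \qquad \beta''(u) = \frac{\log(a)\log(n)\big[\log(u) + 2\big]}{u^2\log^3(u)},\]
so $|\beta'(u)|, |\beta''(u)| \asymp \log n$ uniformly for $u \in [\eta_1,\eta_2]$ (which avoids $\{0,1\}$ by hypothesis). Consequently the strictly positive term $\beta'(u)^2 \asymp (\log n)^2$ dominates $\beta''(u)$ for large $n$, whence $(n^{\varchange{a}{u}})'' = \big(\beta''(u) + \beta'(u)^2\big) n^{\varchange{a}{u}} > 0$ uniformly on $[\eta_1,\eta_2]$. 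Therefore $f''(x) = (h\gamma/n^2)(n^{\varchange{a}{u}})''|_{u=x/n}$ has constant sign on $[\eta_1 n, \eta_2 n]$ for all sufficiently large $n$, with $|f''(x)| \asymp (\log n)^2 n^{\varchange{a}{x/n} - 2}$.

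The hypothesis $\min(\sqrt a, a) < \eta_1 < \eta_2 < \max(\sqrt a, a)$ forces $\varchange{a}{t}$ to lie in a compact sub-interval $[\alpha_0, \alpha_1] \subset (1,2)$ for $t \in [\eta_1,\eta_2]$. Setting $r_1 \asymp (\log n)^2 n^{\alpha_0-2}$, $r_2 \asymp (\log n)^2 n^{\alpha_1-2}$, and $L \asymp n$, Van der Corput's inequality yields
\[|S_n(h)| \ll n(\log n) n^{(\alpha_1-2)/2} + (\log n)^{-1} n^{(2-\alpha_0)/2} = n^{\alpha_1/2}\log n + n^{1 - \alpha_0/2}(\log n)^{-1} = o(n),\]
since $\alpha_1/2 < 1$ and $1 - \alpha_0/2 < 1/2$. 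The main obstacle is the derivative bookkeeping and verifying that the sign of $f''$ stabilizes; both are routine once the expression above is in hand. Note that the hypothesis on $\eta_1,\eta_2$ is essentially sharp for this approach: $\alpha_0 \to 1$ would make $r_1^{-1/2}$ fail to be $o(n)$, and $\alpha_1 \to 2$ would make $n\sqrt{r_2}$ fail to be $o(n)$.
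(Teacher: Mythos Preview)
Your argument is essentially correct and more direct than the paper's, but one step needs care: the version of van der Corput's second-derivative test you quote, $\sum e(f(m)) \ll L\sqrt{r_2} + r_1^{-1/2}$, is not the standard formulation when the ratio $r_2/r_1$ is unbounded. The usual statement (Kuipers--Niederreiter, Ch.~1, Thm.~2.7, or Graham--Kolesnik, Thm.~2.2) gives $\ll (|f'(b)-f'(a)|+1)\,r_1^{-1/2}$; since $|f'(b)-f'(a)| \ll L r_2$, this yields only $(\log n)\,n^{\alpha_1-\alpha_0/2}$, which is \emph{not} $o(n)$ once $\alpha_1 - \alpha_0/2 > 1$ (e.g.\ $\alpha_0$ near $1$ and $\alpha_1$ near $2$). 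Your stated bound is nevertheless valid here because $|f''|$ is monotone on $[\eta_1 n,\eta_2 n]$: split that interval into $O(\log n)$ subintervals on which $|f''|$ varies by at most a factor of $2$, apply the standard estimate on each, and sum the two resulting geometric series to recover $L\sqrt{r_2}+r_1^{-1/2}$. With that patch the proof goes through.

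The paper, by contrast, does not apply the second-derivative estimate directly to $g_n$. It first invokes the van der Corput difference theorem, reducing to the differenced sequence $g_{n,h}(i)=g_n(i+h)-g_n(i)$, and only then applies Kuipers--Niederreiter Theorem~2.7, controlling $|g_{n,h}''|$ via third-derivative bounds $|g_n'''|\asymp(\log n)^3 n^{\sigma-3}$. The effect of the extra differencing is precisely to push the critical exponent from $\alpha_1-\alpha_0/2$ down to $\sigma_2-\sigma_1/2-1/2$, which is automatically below $1$ whenever $1<\sigma_1\le\sigma_2<2$; this lets a single application of the off-the-shelf lemma succeed without any dyadic splitting. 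Your route is shorter once the splitting is supplied; the paper's detour through differencing trades that splitting for one more derivative computation.
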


\begin{proof}
Let $N_n = \big| \{ m \in \Z \ | \ m/n \in (\eta_1,\eta_2) \} \big|$, and note that $N_n \big/ \big( n(\eta_2-\eta_1)\big) \to 1$ as $n \to \infty$. For $n, h \in \N$, let
\begin{align*}
g_n(x) = \gamma n^{\varchange a {(x+\bothfloor {\eta_1 n})/ n}}\quad \text{and}\quad g_{n,h}(x) = g_n(x+h) - g_n(x).
\end{align*}
In this notation, we must show
\begin{align}\label{eqn:equidistributiontoshow}\frac{1}{N_n} \sum_{i=1}^{N_n} \de_{\fracpart {g_n(i)}} \longrightarrow \restr{\lambda}{[0,1)} \quad \text{weakly as } n \to \infty.\end{align}

Since $\varchangeempty a$ is either increasing ($a<1$) or decreasing ($a>1$) between $\sqrt{a}$ and $a$, we can fix $\sigma_1$, $\sigma_2$ such that for all $x \in (\eta_1,\eta_2)$,
\[1 < \sigma_1 < \varchange a{x} < \sigma_2 < 2.\]
To handle the exponential sum estimates that follow, we will show that there exist positive constants $C_1$ and $C_2$ (depending on $a$ and $\gamma$) such that for all $h \in \N$, all sufficiently large $n \in \N$, and all $x \in [1,N_n-h]$,
\begin{align}\label{eqn:boundongdoubleprime} C_1 \ h \ \frac{(\log n)^{3}}{n^{3-\sigma_1}} \leq \left| g_{n,h}''(x) \right| \leq C_2 \ h \ \frac{(\log n)^{3}}{n^{3-\sigma_2}}.\end{align}
By the Mean Value Theorem, $g_{n,h}''(x) = h g_n'''(\xi_x)$ for some $\xi_x \in (x, x+h)$, so it suffices to show that for all $h \in \N$, all sufficiently large $n \in \N$, and all $x \in [1,N_n]$,
\begin{align}\label{eqn:boundongtripleprime} C_1 \ \frac{(\log n)^{3}}{n^{3-\sigma_1}} \leq \left| g_{n}'''(x) \right| \leq C_2 \  \frac{(\log n)^{3}}{n^{3-\sigma_2}}.\end{align}
Writing $g_n'''(x)$ explicitly reveals that
\[g_n'''(x) = g_n(x) \left( \frac{\log n}{n} \right)^3 \left( \frac{x+\bothfloor {\eta_1 n}}{n} \right)^{-3} \left(\frac{\varchange a {\frac{x+\bothfloor {\eta_1 n}}{n}}^6}{-(\log a)^3} + \frac{r(x)}{\log n} \right),\]
where $|r(x)| \ll_a 1$ for $x \in [1,N_n]$ because $(x+\bothfloor {\eta_1 n})/ n \in (\eta_1,\eta_2)$. The inequality in (\ref{eqn:boundongtripleprime}) follows for $n$ sufficiently large since $|\gamma|n^{\sigma_1} \leq |g_n(x)| \leq |\gamma|n^{\sigma_2}$.

To prove (\ref{eqn:equidistributiontoshow}), it suffices by Weyl's Criterion (\cite{kuipersbook}, Chapter 1, Theorem 2.1) to show that for all $b \in \Z \setminus \{0\}$,
\[\frac 1{N_n} \sum_{i=1}^{N_n} e \big(b g_n(i) \big) \longrightarrow 0 \quad \text{as} \quad n \to \infty,\]
where $e(x) = e^{2 \pi i x}$. By the van der Corput Difference Theorem (\cite{kuipersbook}, Chapter 1, Theorem 3.1) and another application of Weyl's Criterion, it suffices to prove that for all $h \in \N$ and for all $b \in \Z \setminus \{0\}$,
\begin{align}\label{eqn:vdcorputdifference}\frac 1{N_n-h} \sum_{i=1}^{N_n-h} e \big(b g_{n,h}(i) \big) \longrightarrow 0 \quad \text{as} \quad n \to \infty.\end{align}
An exponential sum estimate (\cite{kuipersbook}, Chapter 1, Theorem 2.7) gives us that
\[\frac 1{N_n-h} \left| \sum_{i=1}^{N_n-h} e \big(b g_{n,h}(i) \big) \right| \leq \left( \frac{|b| \ |g_{n,h}'(N_n-h) - g_{n,h}'(1) | +2}{N_n-h} \right) \left( \frac{4}{\sqrt{|b| \rho}} +3 \right),\]
where $\rho = C_1 h (\log n)^{3} \big/ n^{3-\sigma_1}$ from (\ref{eqn:boundongdoubleprime}). By the Mean Value Theorem and the upper bound from (\ref{eqn:boundongdoubleprime}), we see the right hand side is bounded from above for sufficiently large $n$ by
\[\left( |b|C_2 h \ \frac{(\log n)^{3}}{n^{3-\sigma_2}} + \frac 2{N_n-h} \right) \left( \frac{4n^{(3-\sigma_1)/2}}{\sqrt{|b| C_1 h (\log n)^{3}}} +3 \right) \ll \frac{n^{(3-\sigma_1)/2}}{n \sqrt{|b| h (\log n)^{3}}},\]
where the implicit constant depends on $a$, $\gamma$, $\eta_1$, and $\eta_2$. The limit in (\ref{eqn:vdcorputdifference}) follows since $(3-\sigma_1)/2 < 1$.
\end{proof}

\section{Remarks and conclusions}\label{sec:remarks}

Here are a number of natural questions and further directions.

\begin{enumerate}[i.]

\item Equations of the form $y=a x- b$ where $0 \leq b < 1$ are handled by Theorem \ref{thm:maintheorem} by rearranging, but there are still simple linear equations which the theorem does not handle.  For example, is the equation $y=2x+2$ solvable in $\psa$ for Lebesgue-a.e. $1< \al< 2$?

\begin{question}Does Theorem \ref{thm:maintheorem} hold with the assumption $0 \leq b < a$ in part \emph{i.} replaced by $a \notin \{0, 1\}$?\end{question} The answer is likely `yes.' Using the technique above, we need infinitely many $n$'s for which the interval $a^{1/ \al}n + [L_n,R_n)$ contains an integer, where now $L_n$ and $R_n$ are both positive or both negative. Accounting for $\{n^\al\}$ and changing variables, this requires control on the set of $n$'s for which $\fracpart {\theta n}$ falls within a shrinking annulus about $0$. These shrinking annuli are not nested, making them difficult to handle with the established theory.

\item Is there a non-metrical version of Theorem \ref{thm:maintheorem}?

\begin{question}Does Theorem \ref{thm:maintheorem} hold with ``Lebesgue-a.e.'' replaced by ``all''?\end{question} The inequality in (\ref{eqn:systemone}) is solvable in $\N$ when $a^{1/\al}$ is irrational by Dirichlet's Theorem, and the whole system is solvable in $\N$ when $a^{1/\al}$ is rational since $\fracpart {n^{\al}}$ is uniformly distributed along arithmetic progressions containing $0$. Therefore, exceptional $\al$'s would only arise because of the second condition, the ``twist,'' in (\ref{eqn:systemone}) when $a^{1/\al}$ is irrational.

Here are two thoughts for proving the result for all $1< \al< 2$. The result would be immediate from Theorem \ref{thm:firstDAform} if $( n^\al)_{n \in \N}$ was known to be equidistributed (or even dense) modulo 1 along denominators of the continued fraction convergents of $a^{1/\al}$. Alternatively, perhaps the set $\psa$ is sufficiently pseudorandom as to contain solutions to linear equations; see \cite{conlongowers} for recent results regarding combinatorial structure in sparse random sets.

\item Asymptotics are known for the distribution of Piatetski-Shapiro sequences in arithmetic progressions, the square-free numbers, and the primes; it is feasible that analogous asymptotics hold for the number of solutions to linear equations, as well.
\begin{question}Is it true that for Lebesgue-a.e. or for all $1 < \al < 2$,
\[\left|\big\{1 \leq m,n \leq N \ \big| \ \intpart {m^\al} = a \intpart {n^\al} + b \big\}\right| \sim_{a,b,\al} N^{2-\al}?\]
\end{question}
\noindent It is not hard to verify that $N^{2-\al}$ is the correct asymptotic for $\al \leq 1$.

\item Which systems of linear equations are solvable in $\psa$? Consider, for example, the system $y=2x$, $z=3x$. Just as is done in Section \ref{sec:reduction}, this can be reduced after a change of variables to the system
\begin{align*}\begin{dcases} \fracdist {\theta n} \leq \frac{c}{n^{\varchange a\theta -1}} \\ \fracdist {\theta^{\log_3 2} n} \leq \frac{c}{n^{\varchange a\theta -1}} \\ \fracpart {\ga n^{\varchange a\theta }} \in I \end{dcases}.\end{align*}
This is ``twisted'' Diophantine approximation on the curve $x \mapsto (x,x^{\log_3 2})$; see \cite{beresnevich}, Theorem 1. Assuming the twist does not interfere with the approximation, it is conceivable that this system is solvable for Lebesgue-a.e. $1 < \al < 3/2$.

\item Solving the system $y=2x$, $z=3x$ in $\psa$ is the same as finding $\fs {(x,x,x)}$ in $\psa$; recall (\ref{eqn:fsdef}). It is an open problem (\cite{guybook}, D18) to determine whether or not the set of squares contains a set of the form $\fs {(x_i)_{i=1}^3}$. We can use Theorem \ref{thm:maintheorem} to solve this problem in almost all $\psa$.
\begin{corollary}
For Lebesgue-a.e. $1 < \al < 2$, the set $\psa$ contains infinitely many sets of the form $\fs {(x_i)_{i=1}^3}$.
\end{corollary}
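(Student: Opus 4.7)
The plan is to combine the doubling structure guaranteed by Theorem \ref{thm:maintheorem}(i) with the fact, recorded in Section \ref{sec:intro}, that for every $\al \in (1,2)$ the set $\psa$ contains arithmetic progressions of every sufficiently large step.

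First, I would apply Theorem \ref{thm:maintheorem}(i) to the equation $y = 2x$ (so $a = 2$, $b = 0$, satisfying $0 \leq b < a$ and trivially solvable in $\N$): for Lebesgue-a.e.\ $\al \in (1,2)$, there are infinitely many $u \in \psa$ with $2u \in \psa$. Second, I would invoke the consequence of the Frantzikinakis--Wierdl results recalled in Section \ref{sec:intro}: for every $\al \in (1,2)$ there exists a threshold $u_0 = u_0(\al)$ such that for every integer $u \geq u_0$ there is some $w \in \N$ with $w, w+u, w+2u \in \psa$.

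Now fix $\al$ in the full-measure subset of $(1,2)$ on which the first conclusion holds. Choose an increasing sequence of integers $u_k \in \psa$ with $u_k \geq u_0$ and $2u_k \in \psa$, and for each $k$ pick $w_k \in \N$ with $w_k, w_k + u_k, w_k + 2u_k \in \psa$. Enumerating the non-empty subset sums of the tuple $(u_k, u_k, w_k)$ gives
\[
\fs{(u_k, u_k, w_k)} = \big\{u_k,\ 2 u_k,\ w_k,\ u_k + w_k,\ 2 u_k + w_k\big\} \subseteq \psa,
\]
and since $u_k \to \infty$ the resulting sets are pairwise distinct for sufficiently large $k$, yielding infinitely many sets of the form $\fs{(x_i)_{i=1}^3}$ inside $\psa$.

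There is no substantial obstacle: the only thing to verify is that the two hypotheses can be invoked simultaneously, which is immediate because the first is a full-measure condition and the second holds for every $\al \in (1,2)$. I do not see how to push the argument through so as to produce $\fs{(x_i)_{i=1}^3}$ with three pairwise distinct $x_i$'s, since that would require simultaneous control over patterns such as $\{u, 2u, 3u\} \subseteq \psa$, which is precisely the flavor of open question raised in item iv.\ of Section \ref{sec:remarks}.
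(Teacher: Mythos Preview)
Your argument is correct and is essentially the same as the paper's own proof: apply Theorem~\ref{thm:maintheorem}(i) to $y=2x$ to obtain infinitely many $u\in\psa$ with $2u\in\psa$, then for each sufficiently large such $u$ use the Frantzikinakis--Wierdl fact that $\psa$ contains a $3$-term progression of step $u$ starting at some $w$, so that $\fs{(u,u,w)}\subseteq\psa$. Your additional remarks on distinctness and on the obstruction to pairwise distinct generators are accurate but not needed for the statement as written.
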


\begin{proof}
By Theorem 1, there are infinitely many $x \in \psa$ for which $2x \in \psa$. For $x$ sufficiently large (depending on $\al$), the set $\psa$ contains an arithmetic progression of step $x$ and length $3$ (\cite{frantzikinakiswierdl}, Proposition 5.1). If $z$ starts such a progression, then $\fs {(x,x,z)} \subseteq \psa$.
\end{proof}

\begin{question}(V. Bergelson) Is $\psa$ an $\ipzero$ set for Lebesgue-a.e. or all $1 < \al < 2$?\end{question} V. Bergelson remarked that while $\psa$ may not always possess ``linear structure,'' it may contain higher order structure. Indeed, the set $\ps {m/n}$ contains the set of $m^\text{th}$ powers, and this implies that $\ps{m/n}$ is a set of multiple recurrence; see \cite{BL}.

The set $A \subseteq \N$ possesses \emph{$\vip$-structure}\footnote{For a more general definition, see \cite{BFM, BM}.} if it contains arbitrarily large subsets of the form
\[\left\{ f\left( \sum_{i \in I} x_i^{(1)}, \ldots, \sum_{i \in I} x_i^{(k)} \right) \ \middle | \ I \subseteq \{1, \ldots, n\}, \ I \neq \emptyset \right\},\]
where $f \in \Z[z_1, \ldots, z_k]$ has zero constant term and $(x_i^{(1)})_{i=1}^n, \ldots, (x_i^{(k)})_{i=1}^n \subseteq \N$. Thus, the set of $m^\text{th}$ powers possesses $\vip$-structure. Note that when $\deg(f) = 1$, the set above is a finite sums set. Recall from Section \ref{sec:intro} that any set with $\vip$-structure is a set of multiple recurrence; perhaps $\vip$-structure in $\psa$ gives an alternate explanation of the set's recurrence properties. \begin{question} (V. Bergelson) Does $\psa$ possess $\vip$-structure for all $\al > 1$?\end{question}

\item Theorem \ref{thm:maintheorem} gives that for many linear equations, $\al=2$ is a threshold value for being solvable or unsolvable in $\psa$. Do other linear equations have such a threshold, and can we compute it?  \begin{question}Does there exist an $\al_S > 1$ with the property that for Lebesgue-a.e. or all $\al > 1$, the equation $x+y=z$ is solvable or unsolvable in $\psa$ according as $\al < \al_S$ or $\al > \al_S$?\end{question} As mentioned in Section \ref{sec:intro}, the equation $x+y=z$ is solvable in $\psa$ for all $\al < 2$; when $\al \geq 3$ is an integer, the equation is unsolvable in $\psa$. What happens for $\al$ just larger than $2$? The same question is meaningful and interesting for more general (systems of) linear equations.

\end{enumerate}

\bibliographystyle{abbrv}
\bibliography{psbib}

\begin{thebibliography}{10}

\bibitem{baker}
R.~C. Baker, W.~D. Banks, J.~Br{\"u}dern, I.~E. Shparlinski, and A.~J.
  Weingartner.
\newblock Piatetski-{S}hapiro sequences.
\newblock {\em Acta Arith.}, 157(1):37--68, 2013.

\bibitem{beresnevich}
V.~Beresnevich.
\newblock Rational points near manifolds and metric {D}iophantine
  approximation.
\newblock {\em Ann. of Math. (2)}, 175(1):187--235, 2012.

\bibitem{BFM}
V.~Bergelson, H.~Furstenberg, and R.~McCutcheon.
\newblock I{P}-sets and polynomial recurrence.
\newblock {\em Ergodic Theory Dynam. Systems}, 16(5):963--974, 1996.

\bibitem{BL}
V.~Bergelson and A.~Leibman.
\newblock Polynomial extensions of van der {W}aerden's and {S}zemer\'edi's
  theorems.
\newblock {\em J. Amer. Math. Soc.}, 9(3):725--753, 1996.

\bibitem{BM}
V.~Bergelson and R.~McCutcheon.
\newblock An ergodic {IP} polynomial {S}zemer\'edi theorem.
\newblock {\em Mem. Amer. Math. Soc.}, 146(695):viii+106, 2000.

\bibitem{BMultrafilter}
V.~Bergelson and R.~McCutcheon.
\newblock Idempotent ultrafilters, multiple weak mixing and {S}zemer\'edi's
  theorem for generalized polynomials.
\newblock {\em J. Anal. Math.}, 111:77--130, 2010.

\bibitem{conlongowers}
D.~{Conlon} and W.~T. {Gowers}.
\newblock {Combinatorial theorems in sparse random sets}.
\newblock {\em ArXiv e-prints}, Nov. 2010.

\bibitem{frantzikinakiswierdl}
N.~Frantzikinakis and M.~Wierdl.
\newblock A {H}ardy field extension of {S}zemer\'edi's theorem.
\newblock {\em Adv. Math.}, 222(1):1--43, 2009.

\bibitem{FK}
H.~Furstenberg and Y.~Katznelson.
\newblock An ergodic {S}zemer\'edi theorem for {IP}-systems and combinatorial
  theory.
\newblock {\em J. Analyse Math.}, 45:117--168, 1985.

\bibitem{guybook}
R.~K. Guy.
\newblock {\em Unsolved problems in number theory}.
\newblock Problem Books in Mathematics. Springer-Verlag, New York, third
  edition, 2004.

\bibitem{harmanbook}
G.~Harman.
\newblock {\em Metric number theory}, volume~18 of {\em London Mathematical
  Society Monographs. New Series}.
\newblock The Clarendon Press, Oxford University Press, New York, 1998.

\bibitem{kuipersbook}
L.~Kuipers and H.~Niederreiter.
\newblock {\em Uniform distribution of sequences}.
\newblock Wiley-Interscience [John Wiley \& Sons], New York-London-Sydney,
  1974.
\newblock Pure and Applied Mathematics.

\bibitem{psoriginal}
I.~I. Pyatecki{\u\i}-{\v{S}}apiro.
\newblock On the distribution of prime numbers in sequences of the form
  {$[f(n)]$}.
\newblock {\em Mat. Sbornik N.S.}, 33(75):559--566, 1953.

\end{thebibliography}

\end{document}